\newtheorem{theorem}{Theorem}[section]
\newtheorem{definition}[theorem]{Definition}
\newtheorem{proposition}[theorem]{Proposition}
\newtheorem{corollary}[theorem]{Corollary}
\newtheorem{lemma}[theorem]{Lemma}
\newtheorem{fact}[theorem]{Remark}
\newcommand{\bdfn}{\begin{definition}}
\newcommand{\edfn}{\end{definition}}
\newcommand{\bthm}{\begin{theorem}}
\newcommand{\ethm}{\end{theorem}}
\newcommand{\bprop}{\begin{proposition}}
\newcommand{\eprop}{\end{proposition}}
\newcommand{\bcor}{\begin{corollary}}
\newcommand{\ecor}{\end{corollary}}
\newcommand{\blem}{\begin{lemma}}
\newcommand{\elem}{\end{lemma}}
\newcommand{\bfact}{\begin{fact}}
\newcommand{\efact}{\end{fact}}
\newcommand{\be}{\begin{enumerate}}
\newcommand{\ee}{\end{enumerate}}
\newcommand{\beq}{\begin{equation}}
\newcommand{\eeq}{\end{equation}}
\newcommand{\ba}{\begin{array}} 
\newcommand{\ea}{\end{array}}
\newcommand {\bea} {\begin{eqnarray}}
\newcommand {\eea} {\end {eqnarray}}
\newcommand {\bua} {\begin{eqnarray*}}
\newcommand {\eua} {\end {eqnarray*}}
\newcommand{\ds}{\displaystyle}
\def\R{{\mathbb R}}
\def\N{{\mathbb N}}
\def\Q{{\mathbb Q}}
\newcommand{\ul}{\underline}
\newcommand{\se}{\subseteq}
\newcommand{\eps}{\varepsilon}
\newcommand{\Ra}{\Rightarrow}
\newcommand{\ra}{\rightarrow}
\newcommand{\si}{\wedge}
\newcommand{\cA}{\mathcal{A}}
\newcommand{\limn}{\ds\lim_{n\to\infty}}
\newcommand{\lsupn}{\ds \limsup_{n\to\infty}}
\newcommand{\linfn}{\ds\liminf_{n\to\infty}}
\newcommand{\modliminf}{modulus of liminf }
\newcommand{\lambdaxy}{(1-\lambda)x\oplus\lambda y}
\newcommand{\aomega}{{\cal A}^{\omega}}
\begin{document}
\title{An application of proof mining to nonlinear iterations}
\author{Lauren\c{t}iu Leu\c{s}tean\\[0.2cm]
\footnotesize Simion Stoilow Institute of Mathematics of the Romanian Academy, Research unit 5,\\
\footnotesize P. O. Box 1-764, RO-014700 Bucharest, Romania\\[0.1cm]
\footnotesize E-mail: Laurentiu.Leustean@imar.ro
}

\date{}
\maketitle

\begin{abstract}
In this paper we apply methods of proof mining to obtain a highly uniform effective rate of asymptotic regularity for the Ishikawa iteration associated to nonexpansive self-mappings of convex subsets of a class of uniformly convex geodesic spaces.  Moreover,  we show that these results are guaranteed by a combination of  logical metatheorems for classical and  semi-intuitionistic systems. 
\end{abstract}

\section{Introduction}

{\em Proof mining} is a paradigm of research concerned with the extraction of hidden finitary 
and combinatorial content from proofs that make use of highly infinitary principles. 
This new information is obtained after a logical analysis of the original mathematical proof, 
using proof-theoretic techniques called {\em proof interpretations}. 
In this way one obtains highly uniform effective bounds for results that are more general 
than the initial ones. While the methods used to obtain these new results come from mathematical 
logic, their proofs can be written in ordinary mathematics. We refer to Kohlenbach's book \cite{Koh08-book} 
for a comprehensive reference for proof mining.

This line of research, developed by Kohlenbach in the 90's, has its origins in 
Kreisel's program of {\em unwinding of proofs}. Kreisel's idea was to apply proof-theoretic techniques  
to analyze concrete mathematical proofs and unwind the 
information hidden in them; see for example \cite{KreMacint82} and, more recently, \cite{Macint05}.

Proof mining has numerous applications to approximation theory, 
asymptotic behavior of nonlinear iterations, as well as (nonlinear) ergodic theory, topological 
dynamics and Ramsey theory. 
In these applications, Kohlenbach's {\em monotone} functional interpretation \cite{Koh96a} is 
crucially used, since it systematically transforms any statement in a given proof 
into a new  version for which explicit bounds are provided.

Terence Tao \cite{Tao07} arrived at a proposal of so-called {\em hard analysis} 
(as opposed to {\em soft analysis}), inspired by the finitary arguments used by him and Green \cite{GreTao08} in their 
proof that there are arithmetic progressions of arbitrary length in the prime numbers, as well as by 
him alone in a series of papers \cite{Tao06,Tao07a,Tao08,Tao08a}. 
As Kohlenbach points out in \cite{Koh07}, 
Tao's hard analysis could be viewed as carrying out, 
using monotone functional interpretation, analysis on the level of 
uniform bounds. 

For mathematical proofs based on classical logic, general logical metatheorems were obtained 
by Kohlenbach  \cite{Koh05} for important classes of metrically bounded spaces in functional 
analysis and generalized to the unbounded case by Gerhardy and Kohlenbach \cite{GerKoh08}. 
They considered metric, hyperbolic and CAT(0)-spaces, (uniformly convex) normed spaces and 
inner product spaces also with abstract convex subsets. The metatheorems were adapted to 
Gromov $\delta$-hyperbolic spaces and $\R$-trees 
\cite{Leu06}, complete metric and normed spaces  \cite{Koh08-book} and uniformly smooth Banach spaces 
\cite{KohLeu12}. The proofs of the metatheorems are based on extensions to the new formal systems of G\" odel's 
functional interpretation combined with negative translation and parametrized  versions of majorization. These logical metatheorems 
guarantee that one can extract  effective uniform bounds from classical proofs of 
$\forall\,\exists$-sentences and that these bounds are independent from parameters satisfying 
weak local boundedness conditions. Thus, the metatheorems can be used to conclude the existence of 
effective uniform bounds without having to carry out the proof analysis: we have to verify only that 
the statement has the right logical form and that the proof can be formalized in our system.

Gerhardy and Kohlenbach \cite{GerKoh06} obtained  similar logical metatheorems for proofs in 
semi-intuitionistic systems, that is proofs based on  intuitionistic logic enriched with 
noneffective principles, such as comprehension in all types for arbitrary negated or $\exists$-free formulas. 
The proofs of these metatheorems use  monotone modified realizability, a monotone version of Kreisel's modified realizability 
\cite{Kre59}. A great benefit of this setting is that there are basically no restrictions on the logical complexity of 
mathematical theorems for which bounds can be extracted. 

{\em The goal of this paper is to present an application of proof mining to the asymptotic behavior of Ishikawa 
iterations for nonexpansive mappings.}

Let $X$ be a normed space, $C\se X$ a convex subset and $T:C\to C$. We shall denote with $Fix(T)$ the set of fixed points of $T$.  The {\em Ishikawa iteration} 
starting with $x\in C$ was introduced in \cite{Ish74} as follows:
\[
x_0=x, \quad x_{n+1}=(1-\lambda_n)x_n+ \lambda_nT((1-s_n)x_n+s_nTx_n),
\]
where $(\lambda_n),(s_n)$ are sequences in $[0,1]$.  The well-known Krasnoselski-Mann iteration \cite{Kra55,Man53} is 
obtained as a special case by taking $s_n=0$ for all $n\in\N$.  

Ishikawa proved that for convex compact subsets $C$ of Hilbert spaces
and Lipschitzian pseudocontractive mappings $T$, this iteration converges strongly towards 
a fixed point of $T$, provided that the sequences $(\lambda_n)$ and $(s_n)$ satisfy some assumptions.

In the following we consider the Ishikawa iteration for nonexpansive mappings and sequences $(\lambda_n),(s_n)$ satisfying 
the following conditions:
\beq
\sum_{n=0}^\infty\lambda_n(1-\lambda_n) \text{ diverges, }  \lsupn s_n<1 \text{ and }\sum_{n=0}^\infty s_n(1-\lambda_n)
\text{ converges.} \label{hyp-lambdan-sn}
\eeq
Tan and Xu \cite{TanXu93} proved the weak convergence of the Ishikawa iteration in uniformly convex Banach spaces $X$ 
which satisfy Opial's condition or whose norm is Fr\'{e}chet differentiable, generalizing in this way a 
well-known result of Reich \cite{Rei79} for the Krasnoselski-Mann iteration. Dhompongsa and Panyanak \cite{DhoPan08} obtained 
the $\Delta$-convergence of the Ishikawa iteration in CAT(0) spaces. $\Delta$-convergence is a concept of 
weak convergence in metric spaces introduced by Lim \cite{Lim76}.

One of the most important properties of any iteration associated to a nonlinear mapping is 
asymptotic regularity, defined by Browder and Petryshyn \cite{BroPet66} for the Picard iteration.  
The {\em Ishikawa iteration}  $(x_n)$ is said to be {\em asymptotically regular }  if 
$\limn \|x_n-Tx_n\|=0$. A rate of convergence of $(\|x_n-Tx_n\|)$ towards $0$ will be called a
 {\em rate of asymptotic regularity} of $(x_n)$.
Asymptotic regularity is the first property one gets before proving the  weak or strong 
convergence  of the iteration towards a fixed point of the mapping. Thus, the following 
asymptotic regularity result is  implicit in the proof of Tan and Xu.

\bthm\label{Ishikawa-as-reg-Banach}
Let $X$ be a uniformly convex Banach space,  $C\se X$ a  convex subset and 
$T:C\to C$ be nonexpansive with $Fix(T)\ne\emptyset$.  Assume that  $(\lambda_n),(s_n)$ satisfy \eqref{hyp-lambdan-sn}. Then  $\limn \|x_n-Tx_n\|=0$ for all $x\in C$.
\ethm

In this paper we show that the proof of the generalization of Theorem \ref{Ishikawa-as-reg-Banach} to a class of uniformly convex geodesic spaces (the so-called $UCW$-hyperbolic 
spaces) can be analyzed using a combination of logical metatheorems for the classical and semi-intuitionistic setting. As we explain in Section  \ref{section-logic-discussion}, 
there are two main steps, 
the first one with a classical proof, analyzed using the combination of monotone functional interpretation and negative translation, while the second one has a constructive proof, 
analyzed more directly using monotone modified realizability.

As a consequence, the logical metatheorems guarantee that one can obtain a quantitative version for the generalization of Theorem \ref{Ishikawa-as-reg-Banach} 
obtained by taking convex  subsets of $UCW$-hyperbolic spaces and by replacing the hypothesis of $T$ having  fixed points with the weaker assumption  that $T$ has approximate fixed points 
in a $b$-neighborhood of the starting point $x$ for some $b>0$.  In the last section of the paper  (Theorem \ref{Ishikawa-main}) we give a direct mathematical proof of this quantitative version, 
providing a uniform rate of asymptotic regularity for the Ishikawa iteration.

We point out that in \cite{Leu10} we computed, for the case when we assume that $T$ has fixed points, a rate of asymptotic regularity $\Phi$ for  $(x_n)$. Applied to the analyzed proof in 
\cite{Leu10}, the logical metatheorems guarantee a priori that the same rate $\Phi$ as in \cite{Leu10} holds when we assume only the existence of approximate fixed points instead.  
However, the rate we compute in Theorem \ref{Ishikawa-main} is slightly changed because we give a more readable mathematical proof of this result.

As we use both functional and modified realizability interpretations to give a logical explanation  of our results, we think that an interesting direction of 
research could be to see if the hybrid  functional interpretation \cite{HerOli08,Oli12} can be used instead. \\[0.2cm]
{\bf Notation}: $\N=\{0,1,2\ldots, \}$ and $[m,n]=\{m,m+1,\ldots, n-1,n\}$ for any $m, n\in\N, m\leq n$.

\section{Logical metatheorems for $UCW$-hyperbolic spa-ces}\label{section-logic}

A {\em  $W$-hyperbolic space}  is a structure $(X,d,W)$, where $(X,d)$ is a metric space 
and $W:X\times X\times [0,1]\to X$  is a {\em convexity} mapping satisfying the following axioms:
\begin{eqnarray*}
(W1) & d(z,W(x,y,\lambda))\le (1-\lambda)d(z,x)+\lambda d(z,y),\\
(W2) & d(W(x,y,\lambda),W(x,y,\tilde{\lambda}))=|\lambda-\tilde{\lambda}|\cdot 
d(x,y),\\
(W3) & W(x,y,\lambda)=W(y,x,1-\lambda),\\
(W4) & \,\,\,d(W(x,z,\lambda),W(y,w,\lambda)) \le (1-\lambda)d(x,y)+\lambda
d(z,w).
\end {eqnarray*}

Takahashi \cite{Tak70} initiated in the 70's the study of {\em convex metric spaces} as 
structures $(X,d,W)$ satisfying (W1). The notion of $W$-hyperbolic space defined above was 
introduced by Kohlenbach \cite{Koh05}. We refer to \cite[p.384]{Koh08-book} for a very nice 
discussion on these spaces and related structures. First examples of $W$-hyperbolic spaces are normed spaces; just take 
$W(x,y,\lambda)=(1-\lambda )x+\lambda y$. A very important class of $W$-hyperbolic spaces are
Busemann's non-positively curved spaces \cite{Bus48,Bus55}, extensively studied  in the 
monograph \cite{Pap05}. Given $x,y\in X$ and $\lambda\in[0,1]$, we shall use the notation $(1-\lambda)x\oplus \lambda y$ for 
$W(x,y,\lambda)$.  A nonempty subset $C\subseteq X$ is said to be {\em convex} if $(1-\lambda)x\oplus \lambda y \in C$ 
for all $x,y\in C$ and all $\lambda\in [0,1]$.

Uniform convexity can be defined in the setting of $W$-hyperbolic spaces following Goebel and 
Reich's definition for the Hilbert ball \cite[p.105]{GoeRei84}. A $W$-hyperbolic space $(X,d,W)$ 
is said to  be {\em uniformly convex} \cite{Leu07} if 
there exists a mapping $\eta:(0,\infty)\times (0,2]\to (0,1]$ such that for all $r>0, 
\varepsilon\in(0,2]$ and  all $a,x,y\in X$,
\begin{eqnarray*}
\left.\begin{array}{l}
d(x,a)\le r\\
d(y,a)\le r\\
d(x,y)\ge\varepsilon r
\end{array}
\right\}
& \quad \Rightarrow & \quad d\left(\frac12x\oplus\frac12y,a\right)\le (1-\eta(r,\eps))r. \label{uc-def}
\end{eqnarray*}
The mapping $\eta$  is said to be a {\em modulus of uniform convexity}.  We 
use the notation $(X,d,W,\eta)$ for a uniformly convex $W$-hyperbolic space with modulus $\eta$.

Uniformly convex $W$-hyperbolic spaces $(X,d,W,\eta)$ with $\eta$ being nonincreasing in the first argument are called {\em $UCW$-hyperbolic spaces}, following  \cite{Leu10}. Obviously, 
uniformly convex Banach spaces are $UCW$-hyperbolic spaces with a modulus $\eta$ 
that does not depend on $r$ at all. Other examples of $UCW$-hyperbolic space are CAT(0) 
spaces, important structures in geometric group theory (see  \cite{BriHae99}). As  the 
author remarked in \cite{Leu07}, CAT(0) spaces have a modulus of uniform convexity  
$\ds\eta(\varepsilon)=\frac{\varepsilon^2}{8}$, quadratic in $\eps$.

In the following  we  give adaptations to  $UCW$-hyperbolic spaces of general logical metatheorems 
for $W$-hyperbolic spaces proved by Gerhardy and Kohlenbach for classical systems in 
\cite{GerKoh08} and for intuitionistic systems in \cite{GerKoh06}.

Let ${\cal A}^{\omega}$ be the system of {\em weakly extensional} classical analysis, which goes back to 
Spector \cite{Spe62}.  It is formulated in the language of functionals of finite types and consists of  
$\mathbf{WE-PA^\omega}$, the weakly extensional Peano arithmetic in all finite types, the axiom schema  
$\mathbf{QF-AC}$  of quantifier-free axiom of choice and the axiom schema $\mathbf{DC^\omega}$ of dependent 
choice in all finite types. Full second order arithmetic in the sense of reverse mathematics \cite{Sim09} 
is contained in ${\cal A}^{\omega}$ if we identify subsets of $\N$ with their characteristic functions. 
We refer the reader to \cite{Koh08-book} for all the undefined  notions related to the system 
${\cal A}^{\omega}$, including the representation of real numbers in this system. 
As a consequence of this representation, the relations $=_\R$, $\leq_\R$ are given by $\Pi_1^0$ predicates, 
while $<_\R$ is given by a $\Sigma_1^0$ predicate.

The theory $\cA^{\omega}[X,d]_{-b}$ for  metric spaces is defined in \cite{GerKoh08} by extending 
$\cA^{\omega}$ to the set ${\bf T}^{X}$ of all finite types over the ground types $0$ and $X$ 
and  by adding two new constants $0_{X}$ of type $X$ and  $d_X$ of type $X\rightarrow X\rightarrow 1$ 
together with axioms expressing the fact that $d_{X}$ represents a pseudo-metric. One defines the equality $=_{X}$ between 
objects of type $X$ as follows:
\bua
x=_X y\,:= \, d_X(x,y)=_\R 0_\R.
\eua
Then $d_X$ represents a metric on the set of equivalence classes generated by $=_X$.

We use the subscript $_{-b}$ here and for the theories defined in the sequel in order to be consistent 
with the notations from \cite{Koh08-book}.

The theory $\cA^\omega[X,d,W]_{-b}$  for $W$-hyperbolic spaces results from $\cA^\omega[X,d]_{-b}$ by adding  a 
new constant $W_X$ of type $X\rightarrow X\rightarrow 1 \rightarrow X$ together with the 
formalizations of the axioms (W1)-(W4).

In order to define the theory associated to $UCW$-hyperbolic spaces, we prove the following lemma, giving equivalent characterizations 
for these spaces.

\bprop\label{char-UCW-hyperbolic}
Let $(X,d,W)$ be a $W$-hyperbolic space. The following are equivalent:
\be
\item\label{uc-mod-1} $X$ is an $UCW$-hyperbolic space with modulus $\eta$.
\item\label{uc-mod-2} there exists $\eta_1:(0,\infty)\times \N\rightarrow \N$ nondecreasing in the first argument such that 
for any $r>0,k\in\N$, and $x,y,a\in X$ 
\begin{eqnarray*}
\left.\begin{array}{l}
d(x,a) \leq r\\
d(y,a) \leq r\\
d\left(\frac12x\oplus\frac12y,a\right) > \left(1-2^{-\eta_1(r,k)}\right)r
\end{array}
\right\}
& \quad \Rightarrow & \quad d(x,y) <  2^{-k}r.
\end{eqnarray*}
\item\label{uc-mod-3} there exists $\eta_2:(0,\infty)\times \N\rightarrow \N$ nondecreasing in the first argument such that 
for any $r>0,k\in\N$, and $x,y,a\in X$ 
\begin{eqnarray*}
\left.\begin{array}{l}
d(x,a)< r\\
d(y,a) < r\\
d\left(\frac12x\oplus\frac12y,a\right) > \left(1-2^{-\eta_2(r,k)}\right)r
\end{array}
\right\}
& \quad \Rightarrow & \quad d(x,y)\le 2^{-k}r.
\end{eqnarray*}
\item\label{uc-mod-4} there exists $\eta_3:\Q_*^+\times \N\rightarrow \N$ nondecreasing in the first argument such that 
for any $r\in\Q_*^+,k\in\N$, and $x,y,a\in X$ 
\begin{eqnarray*}
\left.\begin{array}{l}
d(x,a)< r\\
d(y,a) < r\\
d\left(\frac12x\oplus\frac12y,a\right) > \left(1-2^{-\eta_3(r,k)}\right)r
\end{array}
\right\}
& \quad \Rightarrow & \quad d(x,y)\le 2^{-k}r. 
\end{eqnarray*}
\ee
\eprop
\begin{proof}
$\eqref{uc-mod-1}\Ra\eqref{uc-mod-2}$ Let $\eta$ be a modulus of uniform convexity nonincreasing in the first argument
and define $\ds \eta_1:(0,\infty)\times \N\rightarrow \N, \quad\ds \eta_1(r,k)=\left\lceil -\log_2 \eta(r,2^{-k})\right\rceil.$

$\eqref{uc-mod-2}\Ra\eqref{uc-mod-1}$ Define $\ds \eta:(0,\infty)\times(0,2]\rightarrow (0,1], \quad \eta(r,\eps)=2^{-\eta_1\left(r, \left\lceil -\log_2 \eps \right\rceil\right)}.$

$\eqref{uc-mod-2}\Ra\eqref{uc-mod-3}$ Obviously,  just take  $\eta_2:=\eta_1$.

$\eqref{uc-mod-3}\Ra\eqref{uc-mod-2}$   Define $\eta_1(r,k)=\eta_2(r,k+1)$. 
Let $r>0, k\in\N$ and $a,x,y\in X$ be such that  $\ds d(x,a)\leq r, d(y,a) \leq r$ and $d\left(\frac12x\oplus\frac12y,a\right) > 
\left(1-2^{-\eta_1(r,k)}\right)r=\left(1-2^{-\eta_2(r,k+1)}\right)r$.
Define $x_n=\left(1-\frac1n\right)x\oplus \frac1n a$, $y_n=\left(1-\frac1n\right)y\oplus \frac1n a$ and $z_n=\frac12x_n\oplus\frac12y_n$ for all $n\ge 1$.
Then $d(x_n,a) = \left(1-\frac1n\right)d(x,a)< r$ and similarly $d(y_n,a)<r$.  Furthermore, 
$d(x_n,x)=\frac1n d(x,a), d(y_n,y)=\frac1n d(y,a)$, so $\ds\limn x_n=x, \limn y_n=y$. Since $0\leq d\left(z_n,\frac12x\oplus\frac12y\right)\leq \frac12d(x_n,x)+\frac12d(y_n,y)$, 
 we get that  $\ds \limn z_n=\frac12x\oplus\frac12y$. Applying the continuity  of $d$, it follows that 
\[\limn d(x_n,y_n)=d(x,y) \text{ and } \limn d(z_n,a)=d\left(\frac12x\oplus\frac12y,a\right).\]  
Hence, $d(z_n,a) > \left(1-2^{-\eta_2(r,k+1)}\right)r$ for all $n$ from some $N$ on.  We can then apply \eqref{uc-mod-3} to get that for all $n\ge N$, 
 $d(x_n,y_n) \leq 2^{-k-1}r$. By letting $n\to \infty$, it follows that $d(x,y)\leq 2^{-k-1}r<2^{-k}r$.

$\eqref{uc-mod-3}\Ra\eqref{uc-mod-4}$ It is obvious, just take $\eta_3(q,k)=\eta_2(q,k)$ for all $q\in \Q^*_+, k\in\N$.

$\eqref{uc-mod-4}\Ra\eqref{uc-mod-3}$ For every  $r>0$, let $(q_n^r)_{n\ge 1}$ be a  nondecreasing sequence of positive rationals such that  
$\ds q_n^r\in \left(r-\frac1n,r\right]$ for all $n\ge 1$. Define 
\[\eta_2:(0,\infty)\times \N\rightarrow \N,  \quad \eta_2(r,k)=\sup\{ \eta_3(q_n^r,k)  
\mid n\ge 1\}\leq \eta_3(\lceil r\rceil, k).\]
\end{proof}

We define the theory $\cA^\omega[X,d,UCW,\eta]_{-b}$\footnote{Corrections to the definition of the theory $\cA^\omega[X,d,UCW,\eta]_{-b}$ in \cite{Leu06}: 
Proposition \ref{char-UCW-hyperbolic} is the corrected version of \cite[Proposition 3.8]{Leu06}. Furthermore, 
the axiomatization (A1)-(A3) given in this paper corrects the one from \cite{Leu06}.} of $UCW$-hyperbolic spaces as an extension of $\cA^\omega[X,d,W]_{-b}$  obtained by adding a constant  
$\eta_X$ of type $0\to 0\to 0$, together with three  axioms expressing that $\eta_X$ satisfies Proposition \ref{char-UCW-hyperbolic}.\eqref{uc-mod-4}.
\bua
\begin{array}{ll}
\!\!\!\! (A1) & \!\!\! \forall r^0,k^0\forall x^X,y^X,a^X \bigg(\lambda n^0.r>_\R 0_\R \wedge  d_X(x,a) <_\R \lambda n^0.r \wedge d_X(y,a)<_\R \lambda n^0.r\\
& \!\!\!\wedge\,\displaystyle d_X(W_X(x,y,1/2),a)>_\R \displaystyle \left(1_\R-_\R 2^{-\eta_X(r,k)}\right)\cdot_\R \lambda n^0.r\\
& \!\!\!\rightarrow \, d_X(x,y)\leq_\R 2^{-k}\cdot_\R \lambda n^0.r\bigg),\\[0.2cm]
\!\!\!\!(A2)&  \!\!\!\forall r_1^0,r_2^0,k^0\, \big(r_1\leq_\Q r_2\rightarrow \eta_X(r_1,k)\leq_0 \eta_X(r_2,k)\big),\\[0.2cm]
\!\!\!\!(A3)&  \!\!\!\forall r^0,k^0(\eta_X(r,k)=_0\eta_X(c(r),k)),
\end{array}
\eua
where  $c(n):=\min p\leq_0 n[p=_\Q n]$ is the canonical representative for rational numbers and, 
since $r^0$ codes a rational number $q$, $\lambda n^0.r$ represents $q$ as a real number
 (see \cite[Chapter 4]{Koh98} for details).

If $X$ is a nonempty set, the full-theoretic type structure 
$S^{\omega,X}:=\left<S_\rho\right>_{\rho\in {\bf T}^X}$ over $0$ and $X$ is defined by 
$\ds 
S_0:=\N$, $S_X:=X$, $S_{\rho\rightarrow \tau}:=S_\tau^{S_\rho}$,
where $S_\tau^{S_\rho}$ is the set of all set-theoretic functions $S_\rho\rightarrow S_\tau$.

Let $(X,d,W,\eta)$ be a $UCW$-hyperbolic space.  $S^{\omega,X}$ becomes a model of $\cA^\omega[X,d,UCW,\eta]_{-b}$ by 
letting the variables of type $\rho$ range over $S_\rho$, giving the natural interpretations to the constants 
of $\cA^\omega$, interpreting  $0_X$ by an arbitrary element of $X$, the constants  $d_X$ and $W_X$ as 
specified in 
\cite{Koh05} and  $\eta_X$ by $\eta_X(r, k) := \eta(c(r),k)$. We say that a sentence in the language 
${\cal L}(\mathcal{A}^\omega[X,d,UCW,\eta]_{-b})$ 
holds in a nonempty $UCW$-hyperbolic space $(X,d,W,\eta)$ if it is true in all models of 
$\mathcal{A}^\omega[X,d,UCW,\eta]_{-b}$ obtained from $S^{\omega,X}$ as above.

For  any type $\rho\in {\bf T}^X$, we define the type $\widehat{\rho}\in {\bf T}$, obtained by
replacing all occurrences of the type $X$ in $\rho$ by $0$. We say that $\rho$ has degree   $\le 1$ if $\rho=0$ or 
$\rho=0\to 0\ldots \to 0$ and  that $\rho$ has degree $1^*$ if $\widehat{\rho}$ has degree $\le 1$.  
Furthermore,  $\rho$ has degree
\be
\item $(0,X)$ if $\rho=X$ or $\rho=0\to 0 \ldots \to 0\to X$;
\item  $(1,X)$  if $\rho=X$ or  $\rho=\rho_1\ra\ldots \ra\rho_n\ra X$, 
where each $\rho_i$ has degree $\le 1$ or $(0,X)$;
\item  $(\cdot,0)$  if $\rho=0$ or  $\rho=\rho_1\ra\ldots \ra\rho_n\ra 0$;
\item  $(\cdot,X)$  if $\rho=X$ or  $\rho=\rho_1\ra\ldots \ra\rho_n\ra X$.
\ee
From now on, in order to improve readability, we shall usually omit the subscripts $_\N, _\R,_\Q,_X$ 
excepting the cases where such an omission could create confusions. We shall use  $\N$ instead of 
$0$, $\N^\N$  or $\N\to\N$ instead of $1$ and, moreover, write $n\in\N, f:\N\to\N, x\in X, T:X\to X$ instead of 
$n^0, f^1, x^X, T^{X\to X}$. 

The notion of {\em majorizability} was originally introduced by Howard \cite{How73} and subsequently 
modified by Bezem \cite{Bez85}.  Based on Bezem's notion of {\em strong majorizability} {\em s-maj}, Gerhardy and Kohlenbach 
\cite{GerKoh08} defined, for every parameter $a$ of type $X$, an {\em $a$-majorization} 
relation $\gtrsim^a_\rho$ between objects of 
type $\rho\in {\bf T}^X$ and 
their majorants of type $\widehat{\rho}\in {\bf T}$ as follows:
\be
\item ${x^*}\gtrsim^a_\N  x:\equiv  x^* \ge_\N x$ for $x,x^*\in\N$;
\item ${x^*}\gtrsim^a_X x  :\equiv (x^*)_\R\ge_\R d(x,a)$ for $x^*\in\N, x\in X$;
\item $x^* \gtrsim^a_{\rho\rightarrow\tau} x :\equiv  \forall y^*,y(y^*\gtrsim^a_{\rho} y \rightarrow 
x^* y^*  \gtrsim^a_{\tau} xy)\wedge \forall z^*,z(z^*\gtrsim^a_{\hat{\rho}} z \rightarrow x^* z^* 
\gtrsim^a_{\hat{\tau}} x^*z)$.
\ee
Restricted to the types $\bf T$, the relation $\gtrsim^a$ coincides with strong majorizability 
{\em s-maj} and, hence, for $\rho\in \bf T$ one writes {\em s-maj}$_\rho$ instead of $\gtrsim^a_\rho$, 
as in this case the parameter $a$ is irrelevant.

If $t^*\gtrsim^a t$ for terms $t^*,t$, we say that $t^*$ {\em $a$-majorizes} $t$ or that $t$ is 
{\em $a$-majorized} by $t^*$. A term $t$ is said to be {\em majorizable}  if it has an  
$a$-majorant for some $a\in X$. One can prove that  $t$ is majorizable if and only if
it has an $a$-majorant for all $a\in X$ (see, e.g., \cite[Lemma 17.78]{Koh08-book}). 

\blem\label{lemma-T-majorizable}
Let $T:X\to X$. The following are equivalent.
\be
\item $T$ is majorizable;
\item for all $x\in\N$ there exists $\,\Omega:\N\to\N$ such that 
\beq
\forall n\in\N, y\in X\bigg(d(x,y) < n \ra d(x,Ty)\leq \Omega(n)\bigg); \label{modulus-maj-x}
\eeq
\item for all $x\in\N$ there exists $\,\Omega:\N\to\N$ such that 
\beq
\forall n\in\N, y\in X\bigg(d(x,y)\le  n \ra d(x,Ty)\leq \Omega(n)\bigg) \label{modulus-maj-x-leq}.
\eeq
\ee
\elem
\begin{proof}
$T$ is majorizable if and only if $T$ is $x$-majorizable for each $x\in X$ if and only if for each 
$x\in X$ there exists a function 
$T^*:\N\to\N$ such that $T^*$ is nondecreasing and satisfies
$$\forall n\in\N\, \forall y\in X\bigg(d(x,y)\le n \ra d(x,Ty)\le T^*n\bigg).$$
$(i)\Ra(iii)$ is obvious: take $\Omega=T^*$. For the implication $(iii)\Ra(i)$, given, for 
$x\in X$, $\Omega$ satisfying 
(\ref{modulus-maj-x-leq}), define $\ds T^*n=\max_{k\le n}\Omega(k)$. \\
$(iii)\Ra(ii)$ is again obvious. For the converse implication, given $\Omega$ satisfying  
(\ref{modulus-maj-x}) define 
$\tilde{\Omega}(n)=\Omega(n+1)$. Then  $\tilde{\Omega}$ satisfies (\ref{modulus-maj-x-leq}).
\end{proof}

In the sequel, given a majorizable function $T:X\to X$ and $x\in X$, an $\Omega$ satisfying (\ref{modulus-maj-x-leq})
 will be called  a 
{\em modulus of majorizability at $x$} of $T$; we say also that $T$ is {\em x-majorizable with modulus} $\Omega$. 
We give in Lemma \ref{lemma-T-majorizable} the equivalent condition (\ref{modulus-maj-x}) for logical reasons: since $<_\R$ is a $\Sigma_1^0$ 
predicate and $\le_\R$ is a 
$\Pi_1^0$ predicate, the formula in (\ref{modulus-maj-x})  can be written in purely universal form. 

The following lemma shows that natural classes of mappings in metric or $W$-hyperbolic spaces 
are majorizable; we refer to \cite[Corollary 17.55]{Koh08-book} for the proof.

\blem\label{classes-majorizable}
Let $(X,d)$ be a metric space. 
\be
\item If $(X,d)$ is bounded with diameter $d_X$, then any function $T:X\to X$ is majorizable 
with modulus of majorizability  
$\Omega(n):=\lceil d_X\rceil$ for each $x\in X$.
\item If $T:X\to X$ is $L$-Lipschitz, then $T$ is majorizable with modulus at $x$ given by 
$\Omega(n):=n+ L^*b$, where $b,L^*\in\N$  are 
such that $d(x,Tx)\leq b$ and $L\le L^*$. In particular, any nonexpansive mapping is majorizable 
with modulus $\Omega(n):=n+b$.
\item If $(X,d,W)$ is a $W$-hyperbolic space, then any uniformly continuous mapping $T:X\to X$ 
is majorizable with modulus  
$\Omega(n):=n\cdot 2^{\alpha_T(0)}+1+b$ at $x$,  where $d(x,Tx)\leq b\in\N$ and $\alpha_T$ is 
a modulus of uniform continuity of $T$, i.e. 
$\alpha_T:\N\to\N$ satisfies
\[\forall x,y\in X\,\forall k\in\N\left(d(x,y)\leq 2^{-\alpha_T(k)}\ra d(Tx,Ty)\leq 2^{-k}\right).\]
\ee
\elem

Whenever we write $A(\ul{u})$ we mean that $A$ is a formula in our language which has only the variables $\ul{u}$ free.
A formula $A$ is called a {\em $\forall$-formula} (resp. a {\em $\exists$-formula}) if it has the form 
$$A\equiv\forall\underline{x}^{\underline{\sigma}}A_0(\underline{x},\underline{a}) \quad\text{(resp. } 
A\equiv \exists 
\underline{x}^{\underline{\sigma}}A_0(\underline{x},\underline{a})),$$
where $A_0$  is a quantifier free formula and the types in $\underline{\sigma}$ are of degree 
$1^*$ or $(1,X)$. 
We assume in the following that the constant $0_X$ does not occur in the formulas we consider; 
this is no restriction, since $0_X$ is just an arbitrary constant which could have been 
replaced by any new variable of type $X$.

The following result is an adaptation of a general logical metatheorem proved by Kohlenbach 
\cite{Koh05} for bounded $W$-hyperbolic spaces and  generalized to the unbounded 
case  by Gerhardy and Kohlenbach \cite{GerKoh08}. 

\begin{theorem}\label{logic-meta-UCW}
Let $P$ be $\N$, $\N^\N$ or $\N^{\N\times\N}$, $K$ an $\mathcal{A}^\omega$-definable compact metric space, 
$\rho$ of degree $1^*$, $B_\forall(u,y,z,n)$ a $\forall$-formula and 
$C_\exists(u,y,z,N)$ a $\exists$-formula. Assume that $\cA^\omega[X,d,UCW,\eta]_{-b}$ proves that
\bua
\forall u\in P\forall y\in K \forall z^\rho\bigg(\forall n\in\N\, B_\forall\rightarrow 
\exists N\in\N C_\exists\bigg).
\eua
Then one can extract a computable functional 
$\Phi:P\times \N^{(\N\times\ldots \times\N)}\times \N^{\N\times \N}\to\N$ 
such that the following statement is satisfied in all nonempty $UCW$-hyperbolic spaces $(X,d,W,\eta)$:

for all $z\in S_\rho, z^*\in \N^{(\N\times\ldots \times\N)}$, if there exists $a\in X$ such 
that $ z^*\gtrsim^a_\rho  z$, then 
$$\forall u\in P\forall y\in K \bigg(\forall n\le \Phi(u,z^*,\eta)\, B_\forall\ra 
\exists N\le \Phi(u,z^*,\eta)  \,C_\exists\bigg).$$
\end{theorem}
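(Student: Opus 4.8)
The plan is to derive this from the metatheorem of Gerhardy and Kohlenbach for $W$-hyperbolic spaces together with the standard extraction machinery — negative translation followed by monotone functional interpretation — as developed in \cite{GerKoh08} and \cite[Chapters 17--18]{Koh08-book}. Since $\cA^\omega[X,d,UCW,\eta]_{-b}$ is obtained from $\cA^\omega[X,d,W]_{-b}$ merely by adjoining one new constant $\eta_X$ of type $0\to 0\to 0$ (hence of degree $\le 1$) together with the axioms (A1)--(A3), essentially all of the proof is already in place; what must be checked is only that this particular extension is compatible both with the interpretation and with majorization. First I would record that (A1)--(A3) are, up to provable equivalence, $\forall$-formulas: by the representation of real numbers in $\cA^\omega$ the predicates $=_\R$ and $\leq_\R$ are $\Pi^0_1$ while $<_\R$ is $\Sigma^0_1$ (as noted in Section~\ref{section-logic}), so in (A1) the premise is a conjunction of a quantifier-free condition with $\Sigma^0_1$ conditions and the conclusion is $\Pi^0_1$; prenexing the implication turns it into a $\forall$-formula. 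Axioms (A2) and (A3) are $\Pi^0_1$ once $\leq_0$, $=_0$ are unfolded and the closed term $c$ is inserted. Consequently (A1)--(A3) are left untouched by negative translation and their monotone functional interpretation is trivial — a universal sentence $\forall\underline{x}\,A_0$ is interpreted by itself with the empty tuple of realizers — so adding them introduces no new realizing terms and the soundness of the monotone functional interpretation carries over to $\cA^\omega[X,d,UCW,\eta]_{-b}$ exactly as for $\cA^\omega[X,d,W]_{-b}$.

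The second ingredient is majorization of the constants. For $d_X$, $W_X$ (and $0_X$, which by the convention in Section~\ref{section-logic} does not occur) this is the computation already in \cite{GerKoh08}: relative to a parameter $a\in X$, $d_X$ is $a$-majorized by a closed term representing $\lambda x^*,y^*.\,x^*+y^*$ via the triangle inequality $d(x,y)\le d(x,a)+d(a,y)$, and $W_X$ by $\lambda x^*,y^*,\lambda^*.\,\max(x^*,y^*)$ via (W1). The new point is the constant $\eta_X$. Being of degree $\le 1$, i.e. a function $\N\times\N\to\N$, it is majorized (independently of $a$) by its pointwise monotone hull $\eta^*(m,k):=\max\{\eta_X(m',k')\mid m'\le m,\ k'\le k\}$, which is computable from $\eta_X$ and nondecreasing in both arguments. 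Moreover $\eta^*$ still validates a usable form of (A1): replacing the modulus by a pointwise larger one only weakens the premise $d_X(W_X(x,y,1/2),a)>_\R(1-2^{-\eta_X(r,k)})r$, so (A1) — and the monotonicity axiom (A2), now automatic — continue to hold with $\eta^*$ in place of $\eta_X$. This is exactly the situation of the modulus of uniform convexity in the normed-space metatheorem of \cite{GerKoh08}, and it is precisely why the extracted functional $\Phi$ in the statement is permitted to take $\eta$ — more accurately, the majorant $\eta^*\in\N^{\N\times\N}$ — as one of its arguments.

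With these two points in hand the extraction is the routine argument. Given the assumed $\cA^\omega[X,d,UCW,\eta]_{-b}$-proof of $\forall u\in P\,\forall y\in K\,\forall z^\rho(\forall n\,B_\forall\to\exists N\,C_\exists)$, one applies negative translation and then the monotone functional interpretation; $\mathbf{QF-AC}$ has a trivial functional interpretation and $\mathbf{DC^\omega}$ is interpreted by bar recursive functionals, exactly as for $\cA^\omega$ in \cite{Spe62} and \cite{Koh08-book}. This yields closed terms of $\cA^\omega$ in the constants $d_X,W_X,\eta_X$ realizing the monotone interpretation. One then majorizes every such term by the majorization theorem for $\gtrsim^a$ (Section~\ref{section-logic}, following \cite{How73,Bez85,GerKoh08}): each closed term acquires a computable $a$-majorant assembled from the majorants of the constants above, and all $X$- and $K$-typed parameters are eliminated because the right-hand side of $\gtrsim^a$ at those types is a single numeral (using that $K$ is $\cA^\omega$-definable compact, hence has a uniform majorant, and that $z$ is majorized by the given $z^*$). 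Because $B_\forall$ is a $\forall$-formula and $C_\exists$ an $\exists$-formula, and all types occurring in them have degree $1^*$ or $(1,X)$, the monotone interpretation of the implication collapses to a single numerical bound $\Phi(u,z^*,\eta^*)$ with the property that searching over $n\le\Phi$ certifies $\forall n\,B_\forall$ while a witness $N\le\Phi$ can be found for $\exists N\,C_\exists$. Finally, for the model side one notes that for any nonempty $UCW$-hyperbolic space $(X,d,W,\eta)$ the full set-theoretic structure $S^{\omega,X}$, with $\eta_X$ interpreted as in Section~\ref{section-logic}, is a model of $\cA^\omega[X,d,UCW,\eta]_{-b}$ by Proposition~\ref{char-UCW-hyperbolic}, so the bound is valid there, which is the assertion of the theorem.

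The only part that is genuinely new relative to \cite{GerKoh08} — and hence where I expect the main care to be needed — is the bookkeeping around $\eta_X$: one must make sure that the monotone hull $\eta^*$ remains a legitimate modulus for all uniform-convexity reasoning that could appear downstream in a proof formalized in the system, and that the passage from the real-argument, nonincreasing-in-$r$ formulation of uniform convexity to the rational-argument, nondecreasing-in-$r$ axiom (A1) is harmless. This is exactly the content of Proposition~\ref{char-UCW-hyperbolic}, proved above precisely so that (A1)--(A3) are a faithful and interpretation-friendly axiomatization; granting it, everything else is a transcription of the arguments in \cite{GerKoh08} and \cite[Chapters 17--18]{Koh08-book}.
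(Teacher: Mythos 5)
Your proposal is correct and follows essentially the same approach as the paper: the paper's proof simply observes that (A1)--(A3) are universal sentences (since $\leq_\R$ is $\Pi^0_1$ and $<_\R$ is $\Sigma^0_1$), notes that $\eta_X$ is strongly majorized by $\eta_X^*:=\lambda n,m.\max\{\eta_X(i,j)\mid i\leq n,\ j\leq m\}$ (exactly your $\eta^*$), and then invokes the extension argument of \cite{GerKoh08} (Remark 4.13 there). Your write-up spells out why those two observations suffice, which the paper leaves implicit, but the key lemmas and overall structure coincide.
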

\begin{proof}
As $\leq_\R$  is purely universal and  $<_\R$ is  purely existential, one can easily see that the axioms (A1)-(A3) are universal. Furthermore, $\eta_X$ is strongly majorized by 
$\eta_X^*:=\lambda n^0,m^0.\max\{\eta_X(i,j) \mid i\leq n, j\leq m\}$. Then the proofs from \cite{GerKoh08} extend immediately to our theory (see \cite[Remark 4.13]{GerKoh08}).
\end{proof}

\bfact
\be
\item 
Instead of single premises $\forall n B_\forall$ and single variables $u,y,n$ we
may have finite conjunctions of premises as well as tuples $\ul{u}\in P, \ul{y}\in K, 
\ul{n}\in \N$ of variables. 
\item We can have also $\underline{z}^{\ul{\rho}}=z_1^{\rho_1}, 
\ldots, z_k^{\rho_k}$ for types $\rho_1,\ldots, \rho_k$ of degree $1^*$. Then   in the conclusion 
is assumed that 
$z_i^*\gtrsim^a_{\rho_i}z_i$ for one  common $a\in X$ for all $i=1,\ldots, k$. 
The bound $\Phi$ depends now on all the 
$a$-majorants $z_1^*,\ldots, z_k^*$.
\ee
\efact

The proof of Theorem \ref{logic-meta-UCW} is based on an extension to $\cA^\omega[X,d,UCW,\eta]_{-b}$ 
of Spector's \cite{Spe62} interpretation of classical analysis $\aomega$ 
using {\em bar recursion}, combined with $a$-majorization. Furthermore, the proof of the metatheorem actually 
provides an extraction algorithm for the functional $\Phi$, which  can always be defined 
in the calculus of bar-recursive functionals. However, as we shall see also in this paper,  
for concrete applications usually small fragments of $\cA^\omega[X,d,UCW,\eta]_{-b}$ are 
needed to formalize the proof. As a consequence, one gets bounds of primitive recursive 
complexity and very often  exponential  or even polynomial bounds.

We give now a very useful corollary of Theorem \ref{logic-meta-UCW}.

\bcor\label{meta-BRS-UCW}
Let $P$ be $\N$, $\N^\N$ or $\N^{\N\times\N}$, $K$ an $\aomega$-definable compact metric space, 
$B_\forall(\ul{u},\ul{y},x,x^*,T,n)$  a $\forall$-formula 
and $C_\exists(\ul{u},\ul{y},x,x^*,T,N)$ a $\exists$-formula. \\
Assume that 
$\mathcal{A}^\omega[X,d,UCW,\eta]_{-b}$ 
proves that
\[\ba{l}
\forall\,  \ul{u}\in P\, \forall\, \ul{y}\in K \, \forall \, x,x^*\in X\,\forall\, T:X\to X\,\forall\,\Omega:\N\to\N \\
\quad\quad\quad
\bigg( T \text{ is } x\text{-majorizable with modulus } \Omega\, \si\,   
\forall n\in\N \, B_\forall\,\ra \,\exists N\in\N \, C_\exists
\bigg).
\ea\]
Then one can extract a computable functional $\Phi$ satisfying the following statement for all 
$\ul{u}\in P$, $b\in\N$ and $\Omega:\N\to\N$:
\[\ba{l}
\forall\, \ul{y}\in K \, \forall \, x,x^*\in X\,\forall\, T:X\to X \\
\bigg( T \text{ is }  x\text{-majorizable with modulus } \Omega \, \si \, d(x,x^*)\leq b\, \si\,   
\forall n\le \Phi(\ul{u},b,
\Omega,\eta) \, B_\forall \\
\qquad\qquad\qquad\qquad\ra \,\exists N\le \Phi(\ul{u},b,\Omega,\eta) \, C_\exists\bigg).
\ea\]
holds in all nonempty $UCW$-hyperbolic spaces $(X,d,W,\eta)$.
\ecor
\begin{proof}
The premise "$T$ is  $x$-majorizable with modulus $\Omega$"  is a $\forall$-formula, by 
(\ref{modulus-maj-x}). Furthermore, $0$ $x$-majorizes 
$x$, $b$ is an $x$-majorant for $x^*$, since $d(x,x^*)\le b$, and $\ds T^*:=\lambda n. \max_{k\le n}\Omega(k)$ 
$x$-majorizes $T$, by the 
proof of Lemma \ref{lemma-T-majorizable}. Apply now Theorem \ref{logic-meta-UCW}.
\end{proof}

\bfact
As in the case of Theorem \ref{logic-meta-UCW}, instead of single $n\in \N$ and a single premise 
$\forall n B_\forall$ we could have 
tuples $\ul{n}=n_1,\ldots, n_k$ and a conjunction of premises $\forall n_1 B_\forall^1\si\ldots \si 
\forall n_k B_\forall^k$. 
In this case, in the premise  of the conclusion we shall have
$\forall n_1\le \Phi\,B_\forall^1\si\ldots \si \forall n_k \le \Phi\, B_\forall^k$.
\efact

If $(X,d)$ is a metric space, $C\se X$ and $T:C\rightarrow C$ is a mapping, we denote with 
$Fix(T)$ the set of fixed points of $T$. For $x\in X$ and $b,\delta>0$, let
\[Fix_{\delta}(T,x,b)=\{y\in C\mid d(y,x)\leq b \text{~and~} d(y,Ty)<\delta\}.\]
If $Fix_{\delta}(T,x,b)\ne \emptyset$ for all $\delta>0$, we say that $T$ has {\em  approximate fixed points} in a 
$b$-neighborhood of $x$.  

The following more concrete consequence of Theorem  \ref{logic-meta-UCW} shows that, under 
some conditions, the hypothesis of $T$ having fixed points can be replaced by the weaker one 
that $T$ has approximate fixed points in a $b$-neighborhood of $x$. 
Its proof is similar with the one of \cite[Corollary 4.22]{GerKoh08}.

\begin{corollary}\label{meta-Groetsch}
Let $P$ be $\N$, $\N^\N$ or $\N^{\N\times\N}$, $K$ an $\aomega$-definable compact metric space, 
$B_\forall(\ul{u},\ul{y},x,T,n)$ a $\forall$-formula and 
$C_\exists(\ul{u},\ul{y},x,T,N)$ a  $\exists$-formula. 
Assume that $\mathcal{A}^\omega[X,d,UCW,\eta]_{-b}$ proves that
\[\ba{l}
\forall \,\ul{u}\in P \,\forall\,\ul{y}\in K \, \forall  x\in X\,
\forall \, T:X\to X \, \forall \,\Omega:\N\to\N\\
\!\!\! \bigg(T  \text{ is } x\text{-majorizable with mod. } \Omega \si Fix(T)\ne\emptyset \si 
 \forall n\in\N \, B_\forall\,
\ra \,\exists N\in\N \, C_\exists\bigg).
\ea\]
It follows that one can extract a computable functional $\Phi$ such that for all \\
$\ul{u}\in P, b\in\N$ and
$\Omega:\N\to\N$,
\[\ba{l}
\forall\,\ul{y}\in K \, \forall \, x\in X\,\forall\, T:X\to X \\
\qquad\bigg( T   \text{ is } x\text{-majorizable with modulus } \Omega\,\si\, 
\forall \delta>0\big(Fix_\delta(T,x,b)\ne\emptyset\big) \, \si \\
\hfill \forall n\le \Phi(\ul{u},b,\Omega,\eta) B_\forall\,
\ra \,\exists N\le \Phi(\ul{u},b,\Omega,\eta) \, C_\exists\bigg).
\ea\]
holds in any nonempty  $UCW$-hyperbolic space $(X,d,W,\eta)$. 
\end{corollary}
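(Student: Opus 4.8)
The plan is to bring the premise $Fix(T)\ne\emptyset$ into a form to which Theorem \ref{logic-meta-UCW} applies, by replacing it with a \emph{majorizable} witness, and afterwards to substitute an approximate fixed point into the extracted bound; this follows the pattern of \cite[Corollary 4.22]{GerKoh08}. First I would note that, since adding hypotheses can only make a statement easier to prove, $\mathcal{A}^\omega[X,d,UCW,\eta]_{-b}$ also proves
\[
\ba{l}
\forall \ul u\in P\,\forall\ul y\in K\,\forall b\in\N\,\forall x,p\in X\,\forall T:X\to X\,\forall\Omega:\N\to\N\\
\quad(T \text{ is } x\text{-majorizable with modulus } \Omega\,\si\,d(x,p)\le b\,\si\,p=_X Tp\,\si\,\forall n\in\N\,B_\forall\,\ra\,\exists N\in\N\,C_\exists),
\ea
\]
because the three conjoined premises entail $Fix(T)\ne\emptyset$ (witnessed by $p$), so that the hypothesis of the corollary applies; here $b$ is merely a harmless dummy parameter. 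Every premise of this implication is a $\forall$-formula: "$T$ is $x$-majorizable with modulus $\Omega$" is purely universal by \eqref{modulus-maj-x}, $d(x,p)\le b$ is $\Pi_1^0$, the equality $p=_X Tp$ unfolds to $d_X(p,Tp)=_\R 0_\R$, which is $\Pi_1^0$ of some shape $\forall k\, D_0(p,T,k)$ with $D_0$ quantifier-free, and $\forall n\,B_\forall$ is a $\forall$-formula by assumption.

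Next, reasoning as in the proof of Corollary \ref{meta-BRS-UCW}, I would use that $x$ is $x$-majorized by $0$, that $p$ is $x$-majorized by $b$ (since $d(x,p)\le b$), that $T$ is $x$-majorized by $\lambda n.\max_{k\le n}\Omega(k)$, and that $b$ and $\Omega$ majorize themselves; after merging the universal premises (including $\forall k\, D_0$) into a single $\forall$-formula as allowed by the remarks following Theorem \ref{logic-meta-UCW}, Theorem \ref{logic-meta-UCW} yields a computable functional $\Phi(\ul u,b,\Omega,\eta)$ such that in every nonempty $UCW$-hyperbolic space $(X,d,W,\eta)$
\[
\ba{l}
\forall\ul y\in K\,\forall x,p\in X\,\forall T:X\to X\\
\quad(T \text{ is } x\text{-majorizable with modulus } \Omega\,\si\,d(x,p)\le b\,\si\,\forall k\le\Phi\, D_0(p,T,k)\,\si\,\forall n\le\Phi\,B_\forall\,\ra\,\exists N\le\Phi\,C_\exists).
\ea
\]
The decisive point is that $\Phi$ does not depend on the point $p$, only on its majorant $b$ (and on $\ul u$, $\Omega$ and $\eta$).

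Finally I would exploit that the bounded premise $\forall k\le\Phi\, D_0(p,T,k)$ is a finite conjunction of quantifier-free conditions on $p$ and $T$ which is implied once $d(p,Tp)$ is small enough — concretely once $d(p,Tp)<2^{-\Phi}$, up to a fixed additive constant in the exponent coming from the real-number coding in $\mathcal{A}^\omega$. So, given $\ul u\in P$, $b\in\N$ and $\Omega:\N\to\N$, I first compute $\Phi:=\Phi(\ul u,b,\Omega,\eta)$ — there is no circularity, since $\Phi$ depends neither on $p$ nor on $\delta$ — and set $\delta:=2^{-\Phi}$ (shrinking it by the coding constant if necessary). If $T$ is $x$-majorizable with modulus $\Omega$ and $Fix_\delta(T,x,b)\ne\emptyset$, choose $y\in Fix_\delta(T,x,b)$; then $d(x,y)\le b$ and $d(y,Ty)<\delta$, so substituting $p:=y$ into the displayed bounded statement and using $\forall n\le\Phi\,B_\forall$ gives $\exists N\le\Phi\,C_\exists$. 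Since $Fix_\delta(T,x,b)\ne\emptyset$ is one conjunct of $\forall\delta>0\,(Fix_\delta(T,x,b)\ne\emptyset)$, this is exactly the asserted conclusion, with $\Phi$ having the stated dependences.

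I expect the only genuinely delicate point to be this last reduction — verifying that, once the quantifier over $k$ has been bounded by $\Phi$, the formalized equality $p=_X Tp$ collapses to an inequality of the form $d(p,Tp)<2^{-\Phi}$ (up to a harmless coding constant), so that a single numerical $\delta$ computable from $\Phi$ does the job. Everything else reduces to the routine checks — the types in play are of degree $1^*$, $b$ genuinely $x$-majorizes $p$, and the several universal premises may be combined into one — already performed in the proof of Corollary \ref{meta-BRS-UCW}.
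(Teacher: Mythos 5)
Your proposal is correct and takes essentially the same route as the paper: trade $Fix(T)\ne\emptyset$ for a universally quantified $p\in X$ with $d(x,p)\le b$ and $p=_X Tp$ written as a $\forall$-formula, apply the metatheorem with $p$ $x$-majorized by $b$, and then read off a $\delta$ (depending only on $\Phi$) so that any $y\in Fix_\delta(T,x,b)$ can be substituted for $p$. The only cosmetic difference is that the paper routes through Corollary~\ref{meta-BRS-UCW} and unfolds $p=_X Tp$ specifically as $\forall k\,(d(p,Tp)\le_\R 2^{-k})$ — whose matrix $d(p,Tp)\le_\R 2^{-k}$ is itself a permitted $\Pi^0_1$ $\forall$-formula — which makes the bounded premise collapse directly to $d(p,Tp)\le 2^{-\Phi}$ with no need for the ``coding constant'' you hedge on.
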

\begin{proof}
The statement proved in $\mathcal{A}^\omega[X,d,UCW,\eta]_{-b}$ can be written as
\[\ba{l}
\forall\,  \ul{u}\in P\, \forall\, \ul{y}\in K \, \forall \, x,p\in X\,\forall\, T:X\to X \,\forall\, \Omega:\N\to\N\\
\bigg( T \,  x\text{-maj. mod.} \Omega \si \forall k\in\N\left(d(p,Tp)\le_\R 2^{-k}\right)
 \si  
\forall n\in\N  B_\forall \ra \exists N\in\N \, C_\exists\bigg).
\ea\]
We have used the fact that  $Fix(T)\ne\emptyset$ is equivalent with $\exists p\in X(Tp=_Xp)$ that 
is further equivalent with 
$\exists p\in X\,\forall \,k\in\N\big(d(p,Tp)\le_\R 2^{-k}\big)$, by using the definition of 
$=_X$ and $=_\R$ in our system. 
As all the premises are $\forall$-formulas, we can apply Corollary \ref{meta-BRS-UCW} to 
extract a functional $\Phi$ such that for all 
$b\in\N$,
\[\ba{l}
\forall\,  \ul{u}\in P \,\forall\, \ul{y}\in K \, \forall \, x,p\in X\,\forall\, T:X\to X\,\forall \,\Omega:\N\to\N \\
\quad\bigg(T\, x\text{-maj. mod. } \Omega \si d(x,p)\le b \si\forall k\le \Phi(\ul{u},b,\Omega,\eta)\big(d(p,Tp)\le
2^{-k}\big)\\ 
\hfill \si \, \forall n\le \Phi(\ul{u},b,\Omega,\eta) \, B_\forall\,\ra \,\exists N\le \Phi(\ul{u},b,\Omega,\eta)\, 
C_\exists\bigg),
\ea\]
that is
\[\ba{l}
\forall\,  \ul{u}\in P\,\forall\, \ul{y}\in K \, \forall \, x\in X\,\forall\, T:X\to X\,\forall \,\Omega:\N\to\N
\bigg(T \, x\text{-maj. mod. } \Omega\,\si\\
\exists p\in X\big(d(x,p)\le b \, \si\,\forall k\le \Phi(\ul{u},b,\Omega,\eta)
\big(d(p,Tp)\le 2^{-k}\big)\big)\, \si \, \\
\forall n\le \Phi(\ul{u},b,\Omega,\eta) \, B_\forall\, 
\ra \,\exists N\le \Phi(\ul{u},b,\Omega,\eta)\, 
C_\exists\bigg).
\ea\]
Use the fact that  the existence of $p\in X$ such that $d(x,p)\le b$ and $\forall k\le \Phi\big(d(p,Tp)\le 2^{-k}\big)$ 
is equivalent with 
the existence of $p\in X$ such that $d(x,p)\le b$ and $d(p,Tp)\le 2^{-\Phi}$ which is obviously implied by 
$\forall \delta>0\left(Fix_\delta(T,x,b)\ne\emptyset\right)$.
\end{proof}

We shall apply the above corollary in the next section for nonexpansive mappings $T:X\to X$. 
In this case, as we have seen in Lemma \ref{classes-majorizable}, a modulus of majorizability at $x$ is 
given by $\Omega(n)=n+\tilde{b}$, where $\tilde{b}\ge d(x,Tx)$, so the bound $\Phi$ will depend  on $\ul{u},\eta,b$ and 
$\tilde{b}>0$ such that $d(x,Tx)\le \tilde{b}$.

For all $\delta>0$ there exists $y\in X$ such that $Fix_\delta(T,x,b)\ne\emptyset$, hence
$$d(x,Tx)\le d(x,y)+d(y,Ty)+d(Ty,Tx)\le 2d(x,y)+d(y,Ty)\le 2b+\delta$$ 
for all $\delta >0$. 
It follows that $d(x,Tx)\le 2b$, so we can take $\tilde{b}:=2b$. 
As a consequence, the bound $\Phi$ will depend only on $\ul{u},b$ and $\eta$. 

\mbox{}

The above logical metatheorems were obtained for classical proofs in  metric, $W$-hyperbolic or 
$UCW$-hyperbolic spaces. Gerhardy and Kohlenbach \cite{GerKoh06} considered similar metatheorems 
for semi-intuitionistic proofs, that is proofs in intuitionistic analysis enriched with some 
non-constructive principles. 
Let ${\cal A}_i^{\omega}:=\mathbf{E-HA^\omega}+\mathbf{AC}$, where $\mathbf{E-HA^\omega}$ is 
the extensional Heyting arithmetic in all finite types and $\mathbf{AC}$ is the full axiom of choice. 
The theories $\cA_i^{\omega}[X,d]_{-b}$, $\cA_i^\omega[X,d,W]_{-b}$ 
and $\cA_i^\omega[X,d,UCW]_{-b}$ are obtained  as above as extensions of $\cA_i^{\omega}$; 
we refer to \cite{GerKoh06} for details.

Comprehension for negated formulas is the following principle:
\[
CA_\neg^{\ul{\rho}}: \quad \exists \Phi\leq_{\ul{\rho}\to \N}\lambda\, \ul{x}^{\ul{\rho}}.1\,\forall 
\ul{y}^{\ul{\rho}}\big(\Phi(\ul{y})=_\N 0\leftrightarrow \neg A(\ul{y}\big),
\]
where $\ul{\rho}=\rho_1,\ldots, \rho_k$ and $\ul{y}=y_1^{\rho_1}, \ldots, y_k^{\rho_k}$.

The following result is an adaptation to  $UCW$-spaces  of   \cite[Corollary 4.9]{GerKoh06}.

\begin{theorem}\label{cor-application-Ishikawa-intuitionistic}
Let $P$ be $\N$, $\N^\N$ or $\N^{\N\times\N}$, $K$  an $\cA_i^\omega$-definable compact 
Polish space and let $B(\ul{u}, \ul{y},T,x)$, $C(\ul{u}, \ul{y},T,x,N)$ be arbitrary formulas. 

Assume that $\cA_i^\omega[X,d,UCW,\eta]_{-b}+CA_\neg$ proves that
\begin{eqnarray*}
\forall \,\ul{u}\in P \,\forall\,\ul{y}\in K \, \forall  x\in X\,
\forall \, T:X\to X \, \forall \,\Omega:\N\to\N\\
 \bigg(T \text{ is } x\text{-majorizable with modulus }  \Omega\,\wedge\, \neg B \, \rightarrow \,\exists N\in\N\, C\bigg).
\end {eqnarray*}
Then one can extract a G\"{o}del primitive recursive   functional $\Phi$ such that for all   
$\ul{u}\in P$ and $\Omega:\N\to\N$, 
\[
\ba{c}
\forall\,\ul{y}\in K \, \forall \, x\in X\,\forall\, T:X\to X \exists N\le \Phi(\ul{u},\Omega,\eta) \\
\qquad\bigg(  T \text{ is } x\text{-majorizable with modulus } \Omega\, \si\,   \neg B \, \ra \,C\bigg).
\ea
\]
holds in any  nonempty $UCW$-hyperbolic space $(X,d,W,\eta)$.

As before, instead of  a single premise $B$, we may have a finite conjunction of premises. 
\end{theorem}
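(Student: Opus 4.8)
The plan is to derive Theorem~\ref{cor-application-Ishikawa-intuitionistic} from the general semi-intuitionistic metatheorem of Gerhardy and Kohlenbach \cite[Corollary 4.9]{GerKoh06}, exactly as Theorem~\ref{logic-meta-UCW} was derived from its classical counterpart. Concretely, I would argue that the theory $\cA_i^\omega[X,d,UCW,\eta]_{-b}$ is a conservative-in-the-relevant-sense extension of the Gerhardy--Kohlenbach intuitionistic theory for $W$-hyperbolic spaces, and that the additional structure (the constant $\eta_X$ together with the axioms $(A1)$--$(A3)$) does not disturb the monotone modified realizability interpretation on which their metatheorem rests. The key point is the same one invoked in the proof of Theorem~\ref{logic-meta-UCW}: since $\leq_\R$ is $\Pi_1^0$ and $<_\R$ is $\Sigma_1^0$, the axioms $(A1)$--$(A3)$ are (equivalent to) purely universal sentences, and $\eta_X$ is trivially (strongly) majorized by its own monotone hull $\eta_X^*:=\lambda n,m.\max\{\eta_X(i,j)\mid i\le n,\ j\le m\}$, which is itself primitive recursive in $\eta_X$. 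Universal axioms are unproblematic for modified realizability (they are their own realizers, self-realizing), so adding them preserves the soundness of the interpretation, and a majorizable new constant contributes only a majorizable realizer.

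First I would recall the precise form of \cite[Corollary 4.9]{GerKoh06}: for semi-intuitionistic proofs over $\cA_i^\omega[X,d,W]_{-b}+CA_\neg$ of a sentence $\forall \ul u\in P\,\forall \ul y\in K\,\forall x^X\,\forall T^{X\to X}\,(\text{$T$ $x$-majorizable with modulus }\Omega\,\wedge\,\neg B\to\exists N\,C)$, with $B,C$ \emph{arbitrary} formulas, one extracts a G\"odel primitive recursive $\Phi$ realizing the monotone functional interpretation/realizability of the conclusion, the bound depending only on the majorant data $\Omega$ (and $\ul u$). The reason there is essentially no restriction on the logical complexity of $B,C$ — in contrast to the classical case, where $\forall$- and $\exists$-formulas are required — is that modified realizability is applied directly, without a preceding negative translation, so negated premises $\neg B$ are $\exists$-free and hence trivially realized, and existential conclusions $\exists N\,C$ yield their witness directly. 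Second, I would verify that every ingredient of the Gerhardy--Kohlenbach proof goes through verbatim after the extension: monotone modified realizability is sound for $\mathbf{E\text{-}HA^\omega}+\mathbf{AC}$ plus universal axioms plus $CA_\neg$; the parametrized $a$-majorization relation $\gtrsim^a$ of Section~\ref{section-logic} behaves as in \cite{GerKoh08,GerKoh06} on the types $\mathbf T^X$; the compact Polish space $K$ and the parameter domain $P$ are handled by the standard representation; and the new constant $\eta_X$ needs a closed majorant, supplied by $\eta_X^*$ above — here the axiom $(A2)$ (monotonicity of $\eta_X$ in its first argument) and $(A3)$ (invariance under the canonical representative $c$) are exactly what is needed to make the interpretation in $S^{\omega,X}$ via $\eta_X(r,k):=\eta(c(r),k)$ coherent. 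One then reads off that the extracted bound $\Phi$ may legitimately take $\eta$ (equivalently $\eta_X^*$) as an argument, giving the claimed dependence $\Phi(\ul u,\Omega,\eta)$.

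The main obstacle, as in the analogous classical Theorem~\ref{logic-meta-UCW}, is not conceptual but a matter of bookkeeping: one must check that the axioms $(A1)$--$(A3)$, written out in the official language with the real-number quantifiers unfolded, really are equivalent to $\forall$-sentences (this uses that $r>_\R 0_\R$ and the strict inequality in the hypothesis of $(A1)$ are $\Sigma_1^0$, so that their occurrence as \emph{antecedents} makes the whole implication $\Pi_1^0$, i.e. purely universal modulo the arithmetical quantifier over the witness), and that $\eta_X^*$ is a genuine term of $\cA_i^\omega$ strongly majorizing $\eta_X$. Once that is in place, the conclusion is immediate from \cite[Remark after Corollary 4.9]{GerKoh06} (or the remark in \cite{GerKoh06} that the metatheorems relativize to any extension by majorizable constants and universal axioms), and the final clause — that a single premise $B$ may be replaced by a finite conjunction — follows because a conjunction of premises can be absorbed into a single one, $\neg(B_1\wedge\cdots\wedge B_k)$ being handled just as any negated formula. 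I would therefore present the proof simply as: \emph{the axioms $(A1)$--$(A3)$ are universal and $\eta_X$ is majorized by the closed term $\eta_X^*$; hence the proof of \cite[Corollary 4.9]{GerKoh06} extends immediately to $\cA_i^\omega[X,d,UCW,\eta]_{-b}$, with $\eta$ (equivalently $\eta_X^*$) entering as an additional argument of the extracted bound.}
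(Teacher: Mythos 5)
Your proposal is correct and takes essentially the same approach the paper itself uses: the paper gives no separate proof of this intuitionistic metatheorem (it is stated as ``an adaptation to $UCW$-spaces of \cite[Corollary 4.9]{GerKoh06}''), but the argument is exactly the one given for the classical Theorem~\ref{logic-meta-UCW} --- the axioms $(A1)$--$(A3)$ are universal (using that $\leq_\R$ is $\Pi_1^0$ and $<_\R$ is $\Sigma_1^0$, with the strict inequalities appearing as antecedents), $\eta_X$ is strongly majorized by the closed term $\eta_X^*$, and universal axioms together with majorizable new constants leave monotone modified realizability sound, so the Gerhardy--Kohlenbach proof extends verbatim with $\eta$ entering the bound. (One tiny slip: a conjunction of premises $\neg B_1\wedge\cdots\wedge\neg B_k$ is intuitionistically $\neg(B_1\vee\cdots\vee B_k)$, not $\neg(B_1\wedge\cdots\wedge B_k)$, though this does not affect the substance since either way the conjunction remains $\exists$-free.)
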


\section{Logical discussion of the  asymptotic regularity proof}\label{section-logic-discussion}

Throughout this section  $(X,d,W,\eta)$ is a $UCW$-hyperbolic space, $C\se X$ a convex subset and $T:C\to C$ is a nonexpansive mapping. The {\em Ishikawa iteration} 
starting with $x\in C$ is defined similarly with the case of normed spaces:
\[
x_0=x, \quad x_{n+1}=(1-\lambda_n)x_n\oplus\lambda_nT((1-s_n)x_n\oplus s_nTx_n),
\]
where $(\lambda_n),(s_n)$ are sequences in $[0,1]$. 

As we  explain in the sequel, the logical metatheorems  presented in the previous section guarantee that one can extract  
an effective uniform rate of asymptotic regularity for the Ishikawa iteration from the proof of the generalization of 
Theorem \ref{Ishikawa-as-reg-Banach} to $UCW$-hyperbolic spaces.

\bthm\label{Ishikawa-as-reg-UCW}
Assume that  $Fix(T)\ne\emptyset$ and that  $(\lambda_n),(s_n)$ satisfy \eqref{hyp-lambdan-sn}. Then  $\limn  d(x_n,Tx_n)=0$ for all $x\in C$.
\ethm

By an inspection of the proof of Theorem \ref{Ishikawa-as-reg-UCW}, one can see that it consists of two important 
steps. One proves first the following result.

\bprop\label{Ishikawa-liminf-xn-Txn=0}
Assume that  $Fix(T)\ne\emptyset$, $\ds \sum_{n=0}^\infty\lambda_n(1-\lambda_n)$ diverges and $\ds \lsupn s_n<1$.
Then $\ds\linfn d(x_n,Tx_n)=0$ for all $x\in C$.
\eprop

A first  remark is that the proof of Proposition \ref{Ishikawa-liminf-xn-Txn=0} is by contradiction, hence it  is ineffective. Secondly, 
it is enough to consider nonexpansive mappings $T:X\to X$, as convex subsets of $UCW$-hyperbolic spaces  are themselves  $UCW$-hyperbolic spaces.

The assumption  that  $\ds\sum_{n=0}^\infty\lambda_n(1-\lambda_n)$ diverges is equivalent with the existence of
a rate of divergence $\theta:\N\to\N$ for the series, that is a mapping $\theta$ satisfying 
$\ds\sum_{k=0}^{\theta(n)}\lambda_k(1-\lambda_k)\geq n$ for all $n\in\N$. 
As $(s_n)$ is a sequence in $[0,1]$, the assumption that $\lsupn s_n<1$ is equivalent with the existence of 
$L,N_0\in\N, L\geq 1$ such that $\ds s_n\leq 1-\frac1L$ for all $n\geq N_0$.

Furthermore, since $d(x_n,Tx_n)\geq 0$, the following statements are equivalent:
\be
\item $\ds \linfn d(x_n,Tx_n)=0$.
\item\label{modulus-liminf} for all $k,l\in \N$ there exists $N\geq k$ such that $d(x_N,Tx_N) < 2^{-l}$.
\ee
By a {\em \modliminf} $\Delta$ for $(d(x_n,Tx_n))$ we shall understand a mapping 
$\Delta:\N\times\N\to\N$  satisfying
\[\forall k,l\in\N\,\exists N\leq \Delta(l,k)\,(N\geq k \,\wedge\, d(x_N,Tx_N) < 2^{-l}).\] 
 
One can easily conclude that $\cA^\omega[X,d,UCW,\eta]_{-b}$ proves the following 
formalized version of Proposition \ref{Ishikawa-liminf-xn-Txn=0}:
\bua
\forall k,l,N_0,L\in\N \,\forall \theta:\N\to\N \,\forall\lambda^{\N\to (\N\to\N)}_{(\cdot )}, 
s^{\N\to (\N\to\N)}_{(\cdot )}\,
\forall x\in X\,\forall T:X\to X\,\\
\bigg(Fix(T)\ne\emptyset\, \wedge\, B_\forall\to  \exists N\in\N\, \big(N\geq k\,\wedge\, d_X(x_N,Tx_N) <_\R 2^{-l}
\big)\bigg),
\eua
where $\lambda^{\N\to (\N\to\N)}_{(\cdot )}, s^{\N\to (\N\to\N)}_{(\cdot )}$ represent  
elements of the compact Polish space $[0,1]^\infty$ with the product metric and
\bua
B_\forall &\equiv & T\, \text{nonexpansive}\,\wedge \, 
\forall n\in\N\left(\sum_{i=0}^{\theta(n)} \lambda_i(1-\lambda_i)\geq_\R n\right) 
\,\wedge \, \\
&& L\geq_0 1\,\wedge\,  \forall n\in\N\,\left(n\geq N_0 \to s_n\leq_\R 1-_\R \frac1L \right).
\eua
Using the representation of real numbers in our system, one can see immediately that $B_\forall$ is a universal 
formula. Corollary \ref{meta-Groetsch} and the discussion afterwards yield the extractability of a  functional 
$\Delta:=\Delta(l,\eta,b,k,N_0,L,\theta)$ such that for all $b,N_0, L, \theta, (\lambda_n), (s_n)$, 
\begin{eqnarray*}
\forall x\in X\,\,\forall T:X\to X\,\bigg(\forall \delta>0(Fix_\delta(T,x,b)\ne\emptyset)\,\si\, 
B_\forall \rightarrow \\
\qquad\forall k,l\in\N\,\exists N\leq 
\Delta\,\left(N\geq k\,\wedge\, d_X(x_N, Tx_N) <_\R 2^{-l}\right)\bigg)
\end{eqnarray*}
holds in any nonempty $UCW$-hyperbolic space $(X,d,W,\eta)$. As a consequence, $\Delta$ is a \modliminf for $(d(x_n, Tx_n))$.

The second step of the proof is the following result.

\bprop\label{Ishikawa-lim-xn-Txn=0}
Assume furthermore that $\ds\sum_{n=0}^\infty s_n(1-\lambda_n)$ converges. Then $\limn d(x_n,Tx_n)=0$.
\eprop
Let us denote $\alpha_n:=\ds\sum_{i=0}^n s_i(1-\lambda_i)$. The proof of Proposition 
\ref{Ishikawa-lim-xn-Txn=0} is fully constructive and one can easily see that 
$\cA_i^\omega[X,d,UCW,\eta]_{-b}$  proves that
\[
\forall l\in\N\,\forall \gamma:\N\to\N\,\forall\lambda_{(\cdot )}, 
s_{(\cdot )}\,
\forall x,T\,
\big(A\,\wedge\, \exists \Delta:\N\to(\N\to\N)\, B\to \exists N\in\N\, C\big),
\]
where 
\bua
C &\equiv & \forall n\in\N\left(d_X(x_{n+N},Tx_{n+N})\leq_\R 2^{-l}\right), \\
A &\equiv &  T\, \text{nonexpansive}\,\wedge \, \gamma \text{ Cauchy modulus for } (\alpha_n) \\
&\equiv & T\, \text{nonexpansive}\,\wedge \, \forall p,n\in\N\,
\left(\alpha_{\gamma(p)+n}-_\R\alpha_{\gamma(p)}\leq_\R 2^{-p}\right) \text{ and}\\
B &\equiv & \forall k,l\in\N\, \exists N\leq \Delta(l,k)\,(N\geq k \,\wedge\, d_X(x_N,Tx_N) <_\R 2^{-l}).
\eua
Let us consider the following universal formula 
\[D\equiv \forall k,l\in\N\,\exists N\leq \Delta(l,k)\,(N\geq k \,\wedge\, \widehat{d_X(x_N,Tx_N)}(l) <_\Q  
2^{-l+1}),
\]
where we refer again to \cite[Chapter 4]{Koh98} for details on the  construction $f^1\mapsto \widehat{f}$.
Since for every $l\in\N$, $\widehat{d_X(x_N,Tx_N)}(l)$ is a rational $2^{-l}$-approximation of $d_X(x_N,Tx_N)$, it is easy to see that 
$B$ implies $D$. For the same reason, $D$ implies 
\[
\forall k,l\in\N\, \exists N\leq \Delta(l+2,k)\,(N\geq k \,\wedge\, d_X(x_N,Tx_N) <_\R 2^{-l}).
\]
Thus, $\exists \Delta:\N\to(\N\to\N)B$ is equivalent to $\exists \Delta:\N\to(\N\to\N)D$, hence
$\cA_i^\omega[X,d,UCW,\eta]_{-b}$  proves that
\[
\forall l\in\N\,\forall \gamma:\N\to\N\, \forall \Delta:\N\to(\N\to\N)\forall\lambda_{(\cdot )}, 
s_{(\cdot )}\,
\forall x,T \,
\big(A\,\wedge\, D\to \exists N\in\N\, C\big).
\]
We can apply Theorem \ref{cor-application-Ishikawa-intuitionistic}  
for $T$ nonexpansive to conclude that we can 
extract a  functional $\Phi:=\Phi(l,\eta,b,\Delta,\gamma)$ such that for all $x\in X$, $T:X\to X$, 
\[\exists N\leq \Phi \left(d_X(x,Tx)\leq_\R b \,\wedge\, A\,\wedge\, D\to C\right)
\]
holds  in any nonempty $UCW$-hyperbolic space $(X,d,W,\eta)$. Thus, there exists $N\leq\Phi$ such that 
$d(x_{n+N},Tx_{n+N})\leq 2^{-l}$ for all $n\in\N$, hence $d(x_n,Tx_n)\leq 2^{-l}$ for all 
$n\geq \Phi$. It follows that $\Phi$ is a rate  of asymptotic regularity  
for the Ishikawa iteration, whose extraction is guaranteed by logical metatheorems. 
The rate $\Phi$ is also highly uniform, since it does not depend on $X,C,T,x$  except for $b$ and 
the modulus $\eta$ of uniform convexity.

That we get a full rate of asymptotic regularity $\Phi$ is a consequence of the fact that we 
treat  the constructive proof of Proposition \ref{Ishikawa-lim-xn-Txn=0} directly, by applying Theorem \ref{cor-application-Ishikawa-intuitionistic}. 
This also needs  as input a Cauchy modulus $\gamma$ for $(\alpha_n)$.
Alternatively, one can analyze the proof as a classical one and apply Theorem \ref{logic-meta-UCW} and its 
corollaries. Since the fact that $\ds\limn d(x_n,Tx_n)=0$ is a $\forall\exists\forall$-statement, one gets in this case 
only a rate of metastability (as defined by Tao \cite{Tao07}) for the sequence $(d(x_n,Tx_n))$, i.e. 
a mapping $\Psi:\N\times\N^\N\to \N$ satisfying for all $k\in\N$ and all  $g:\N\to\N$,
\[\exists N\leq \Psi(k,g)\, \forall i,j\in[N, N + g(N)]\,\left(|d(x_i,Tx_i)-d(x_j,Tx_j)|< 2^{-k}\right). \]
In order to get such a rate of metastability $\Psi$, one only 
needs a rate of metastability  for $(\alpha_n)$.

Furthermore, one can easily see that induction is used in the proof of Theorem \ref{Ishikawa-as-reg-UCW} 
only to get inequalities on $(x_n)$ (see Proposition \ref{Ishikawa-useful-prop}.\eqref{ineq}). 
However, these inequalities are universal lemmas, hence one can add them as axioms, since 
their proofs have no contribution to the  extraction of the bounds. The rest of the proof 
uses only basic arithmetic, so it can be formalized in a small fragment of  $\cA^\omega[X,d,UCW,\eta]_{-b}$. 
As a consequence, the logical metatheorems guarantee that the bound $\Phi$ is a simple 
polynomial in the input data and the unwinding of the proof, given in the next section, 
produces such a bound.

\section{The quantitative asymptotic regularity result}\label{proof-Ishikawa-main}

In the following we give a proof of the quantitative version of Theorem \ref{Ishikawa-as-reg-UCW}.

\bthm\label{Ishikawa-main}
Let $(X,d,W,\eta)$  be a $UCW$-hyperbolic space, $C\se X$  a convex subset  and $T:C\rightarrow C$  
a nonexpansive mapping. 

Assume that $(\lambda_n),(s_n)$ are sequences in $[0,1]$ satisfying the following properties
\be
\item\label{main-Ishikawa-1-lambda-theta} $\ds\sum_{n=0}^\infty\lambda_n(1-\lambda_n)$ with 
rate of divergence $\theta :\N\to\N$;
\item\label{main-Ishikawa-1-hyp-sn} $\limsup_n s_n<1$ with $L,N_0\in\N$ satisfying 
$\ds s_n\leq 1-\frac1L$ for all $n\geq N_0$;
\item\label{main-Ishikawa-sn-lambdan} $\ds\sum_{n=0}^\infty s_n(1-\lambda_n)$ converges with 
Cauchy modulus $\gamma$.
\ee
Let $x\in C,b>0$ be such that for any $\delta >0$ there is $y\in C$ with 
\begin{equation}
d(x,y)\le b \quad \mbox{and~~}  d(y,Ty) <\delta. \label{hyp-main-b-afp}
\end{equation}

Let $(x_n)$ be the Ishikawa iteration starting with $x$. Then  $\limn d(x_n,Tx_n)=0$ and moreover
\beq
\forall \eps>0\forall n\ge \Phi(\varepsilon,\eta,b,N_0,L,\theta,\gamma)\bigg(d(x_n,Tx_n)<\eps\bigg),
\eeq
where 
\beq 
\Phi:=\Phi(\varepsilon,\eta,b,N_0,L,\theta,\gamma)= \theta(P+\gamma_0+1+N_0),
\eeq
with $\ds \gamma_0= \gamma\left(\frac{\eps}{8b}\right)$ and $\ds P= \left\lceil\frac{L(b+1)}{\varepsilon\cdot \eta\left(b+1,\displaystyle\frac{\varepsilon}{L(b+1)}\right)}
\right\rceil$.
\ethm

We recall a very useful property of UCW-hyperbolic spaces.

\blem\label{prop-UCW}\cite{Leu10}
Let $(X,d,W,\eta)$ be a $UCW$-hyperbolic space. Assume that $r>0,\varepsilon\in(0,2]$ and $a,x,y\in X$ are such that $d(x,a)\le r$, $d(y,a)\le r$  and $d(x,y)\ge\eps r$.
 Then for any $\lambda\in[0,1]$ and for all $s\geq r$,
\bua
d(\lambdaxy,a) \le \left(1-2\lambda(1-\lambda)\eta\left(s,\eps\right)\right)r.
\eua
\elem

For simplicity, we use the notation $y_n := (1-s_n)x_n\oplus s_nTx_n$. The 
following lemma collects properties of the Ishikawa iteration, which will be needed in the sequel.

\blem\label{Ishikawa-useful-prop}
\be
\item\label{ineq} For all $n\in\N$ and $x,z\in X$, the following hold
\bea
(1-s_n)d(x_n,Tx_n) &\leq & d(x_n,Ty_n)\label{d-xn-Txn-d-xn-Tyn}\\
d(x_{n+1},Tx_{n+1})&\leq & (1+2s_n(1-\lambda_n))d(x_n,Tx_n)\label{Ishikawa-as-reg-base-ineq}\\
d(y_n,z) &\le & d(x_n,z)+d(z,Tz)\label{dynz-dxnz}\\
d(Ty_n,z) &\le & d(x_n,z)+2d(z,Tz) \label{dTynz-dxnz}\\
d(x_{n+1},z)& \le &  d(x_n,z)+2\lambda_n d(z,Tz)\label{dTxn1z}\\
d(x_n,z) &\le & d(x,z)+2\sum_{i=0}^{n-1}\lambda_i d(z,Tz) \\
&\le & d(x,z)+2n d(z,Tz).\label{dxn-z-dxz}
\eea
\item\label{Ishikawa-useful-prop-afpp} Assume that $x\in C, b>0$ are such that  $T$ has approximate fixed points in a $b$-neighborhood of $x$. 
Then  $d(x_n,Tx_n)\leq 2b$ for all $n\in\N$.
\ee
\elem
\begin{proof}
\be
\item
\eqref{d-xn-Txn-d-xn-Tyn} and \eqref{Ishikawa-as-reg-base-ineq} are proved in 
\cite[Lemma 4.1]{Leu10}. For the proofs of \eqref{dynz-dxnz}-\eqref{dTxn1z}, just remark that 
$d(y_n,z) \le  (1-s_n)d(x_n,z)+s_nd(Tx_n,z) \le  d(x_n,z)+s_nd(Tz,z)$, $d(Ty_n,z) \le d(y_n,z)+d(z,Tz)$ and $d(x_{n+1},z)\le  (1-\lambda_n)d(x_n,z)+\lambda_n d(Ty_n,z)$. 
An easy induction gives us \eqref{dxn-z-dxz}.
\item Let $n\in\N$. We shall prove that $d(x_n,Tx_n)\leq 2b+\eps$ for all $\eps>0$. Applying the hypothesis with $\ds \delta_n:=\frac{\eps}{4n+1}$, 
we get $z\in X$ such that $d(x,z)\leq b$ and $d(z,Tz)\leq \delta_n$. 
It follows that 
\bua
d(x_n,Tx_n) &\leq & d(x_n,z)+d(Tx_n,z)\leq d(x_n,z)+d(Tx_n,Tz)+d(z,Tz)\\
&\leq & 2d(x_n,z)+d(z,Tz)\\
&\leq & 2d(x,z)+(4n+1)d(z,Tz) \quad \text{by \eqref{dxn-z-dxz}}\\
&\leq & 2b + (4n+1)\delta_n=2b+\eps.
\eua
\ee
\end{proof}

We prove first the quantitative version of Proposition \ref{Ishikawa-liminf-xn-Txn=0}.

\bprop\label{main-Ishikawa-prop-1} 
For all $\eps>0$ and all $k\in\N$,
\[
\exists N\in[k,\Delta]\,
\big(d(x_N,Tx_N)<\varepsilon\big), \label{quant-liminf-d-xn-Tyn}
\]
where $\ds\Delta:=\Delta(\varepsilon,k,\eta,b,N_0,L,\theta)=\theta(P+k+N_0)$.
\eprop
\begin{proof}
Let $\eps>0$ and $k\in\N$.  Using \eqref{d-xn-Txn-d-xn-Tyn}  and the hypothesis \eqref{main-Ishikawa-1-hyp-sn} of Theorem \ref{Ishikawa-main}, one can easily see that it suffices
to prove that 
\beq
\exists N\in[k+N_0,\Delta] \,\left(d(x_N,Ty_N) <\frac{\eps}L\right). \label{claim-d-xn-Tyn}
\eeq

Assume by contradiction that  \eqref{claim-d-xn-Tyn} does not hold, hence  
$\ds d(x_n,Ty_n)\geq \frac{\eps}L$ for all $n\in[k+N_0,\Delta]$. Let  $\ds \delta := \frac{1}{4(\Delta+1)}$ and $z\in Fix_\delta(T,x,b)$. 
We shall use in the sequel the notation $a_n := d(x_n,z)+2d(z,Tz)$.  As a consequence of \eqref{dxn-z-dxz}, 
we get  that for all $n\in[k+N_0,\Delta]$,
\begin{eqnarray*}
a_n &\leq & d(x,z)+(2n+2)d(z,Tz)\le b+2(\Delta+1)\delta < b+1.
\end {eqnarray*}
Remark that $d(x_n,z)\leq a_n$, $d(Ty_n,z)\le a_n$ (by \eqref{dTynz-dxnz}), $\ds d(x_n,Ty_n)\ge \frac{\eps}L
\geq \frac{\eps}{L(b+1)}\cdot a_n$ and $\ds 0< \frac{\eps}{L(b+1)} \le  \frac{d(x_n,Ty_n)}{b+1}\le \frac{2a_n}{b+1} \le 2$.
Hence, we can apply Lemma \ref{prop-UCW} with $\ds r:=a_n, 
s:=b+1$ and $\ds \eps:=\frac{\eps}{L(b+1)}$ to obtain that
\begin{eqnarray*}
d(x_{n+1},z)&=&d((1-\lambda_n)x_n\oplus\lambda_nTy_n,z)\\
&\le& \left(1- 2\lambda_n(1-\lambda_n)\eta\left(b+1,\frac{\eps}{L(b+1)}\right)\right)a_n\\
&=& d(x_n,z)+2d(z,Tz) - 2\lambda_n(1-\lambda_n)\eta\left(b+1,\frac{\eps}{L(b+1)}\right)a_n.
\end {eqnarray*} 

As $\ds a_n\ge \frac{d(x_n,Ty_n)}2\ge \frac{\eps}{2L}$, we get that for all $n\in[k+N_0,\Delta]$,
\beq
d(x_{n+1},z) \le  d(x_n,z)+2d(z,Tz) - \frac{\eps}L\lambda_n(1-\lambda_n)\eta\left(b+1,\frac{\eps}{L(b+1)}\right). 
\label{Ishikawa-main-1-ineq}
\eeq 
Adding (\ref{Ishikawa-main-1-ineq}) for $n=k+N_0,\ldots,\Delta$, it follows that 
\begin{eqnarray*}
d(x_{\Delta+1},z)&\le& d(x_{k+N_0},z)+2(\Delta-k-N_0+1)d(z,Tz)-\\
&& -\frac{\eps}L\eta\left(b+1,\frac{\eps}{L(b+1)}\right)\sum_{n=k+N_0}^{\Delta}\lambda_n(1-\lambda_n).
\end {eqnarray*}
Since 
\bua
 \sum_{n=k+N_0}^{\Delta}\lambda_n(1-\lambda_n)&= &\sum_{n=0}^{\theta(P+k+N_0)}\lambda_n(1-\lambda_n)-
 \sum_{n=0}^{k+N_0-1}\lambda_n(1-\lambda_n)\\
&\ge&  (P+k+N_0)-(k+N_0)=P,
\eua
we get that
\begin{eqnarray*}
d(x_{\Delta+1},z)&\le& \ds d(x_{k+N_0},z)+2(\Delta-k-N_0+1)d(z,Tz)-\\
&&-\frac{P\eps}L\eta\left(b+1,\frac{\eps}{L(b+1)}\right)\\
&\le& \ds d(x,z)+2(\Delta+1)d(z,Tz)-\frac{P\eps}L\eta\left(b+1,\frac{\eps}{L(b+1)}\right)\text{ by \eqref{dxn-z-dxz}}\\
&\le& b+\frac12 - (b+1)<0,
\end{eqnarray*}
that is a contradiction.
\end{proof}

\mbox{}

The proof of Theorem \ref{Ishikawa-main} follows now exactly like the one of \cite[Theorem 4.7]{Leu10}. However, for the sake of completeness, we sketch it in the sequel. 
Let us denote  $\alpha_n:=\ds\sum_{i=0}^n s_i(1-\lambda_i)$.  Since, by  Lemma \ref{Ishikawa-useful-prop}.\eqref{Ishikawa-useful-prop-afpp},   $d(x_n,Tx_n) \leq 2b$,  
we get, as an application of  \eqref{Ishikawa-as-reg-base-ineq}, that for all $m,n\in\N$, 
\bua
d(x_{n+m},Tx_{n+m})&\leq & d(x_n,Tx_n)+4b(\alpha_{n+m-1}-\alpha_{n-1}).
\eua
Apply now Proposition \ref{main-Ishikawa-prop-1} to get   $N\in\N$ such that $\ds d(x_N,Tx_N)<\frac{\eps}2$ and $\ds \gamma_0+1\le N\le \Phi$. For $n\ge\Phi$ it follows that
\bua
d(x_n,Tx_n)&\le & d(x_N,Tx_N)+4b(\alpha_{N+l-1}-\alpha_{N-1}), \quad \text{where }l=n-N \\
&=&d(x_N,Tx_N)+4b\left(\alpha_{\gamma_0+q+l}-\alpha_{\gamma_0+q}\right), \,\,\, \text{where }q=N-1-\gamma_0 \\
& < & \frac{\eps}2+4b\left(\alpha_{\gamma_0+q+l}-\alpha_{\gamma_0}\right) \leq \eps,
\eua
since $\gamma$ is a Cauchy modulus for $(\alpha_n)$.

\mbox{ }

\section*{Acknowledgements:}

The author would like to thank the anonymous reviewer and Ulrich Kohlenbach for valuable comments and suggestions that led to a 
corrected and  greatly improved version of the paper.

This work was supported by a grant of the Romanian National Authority for Scientific
Research, CNCS - UEFISCDI, project number PN-II-RU-TE-2011-3-0122.

\end{document}